%groptions: 14 thisfile gr -a4 -u11
\documentclass[12pt,twoside]{article}
\usepackage{amssymb,amsmath,amsthm,xcolor}
\usepackage[noadjust]{cite}

\usepackage[german,english]{babel}
\usepackage{geometry}
\geometry{a4paper}%,height=23.5cm}

%\swapnumbers
\newtheorem{theorem}{Theorem}

\newtheorem{lemma}[theorem]{Lemma}

\theoremstyle{definition}

\newtheorem{remarks}[theorem]{Remarks}

\numberwithin{equation}{section}

\DeclareSymbolFont{bbold}{U}{bbold}{m}{n}
\DeclareSymbolFontAlphabet{\mathbbold}{bbold}

\newcommand{\R}{\mathbb{R}}

\renewcommand{\Re}{\operatorname{Re}}

\renewcommand\le{\leqslant}
\renewcommand\ge{\geqslant}
\renewcommand{\|}{|\!|}

\newcommand{\set}[2]{\bigl\{#1{;}\breakOK\;#2\bigr\}}

\newcommand\operator[1]{\expandafter\newcommand\csname#1\endcsname{\operatorname{#1}}}
\operator{vol}
\operator{spt}
\operator{dist}

\newcommand{\oop}{{\!\raisebox{-0.1ex}{$\scriptstyle1/p$}}}

\newcommand{\breakOK}{\penalty0}

\makeatletter
\renewcommand{\.}{.\nobreak\@ifnextchar1{\hskip0.18em plus0.05em minus
0.02em}{\hskip0.22em plus0.06em minus 0.04em}}
\makeatother

\begin{document}

\medmuskip=4mu plus 2mu minus 3mu
\thickmuskip=5mu plus 3mu minus 1mu
%\abovedisplayshortskip=2pt plus 3pt
\abovedisplayshortskip=0pt plus 3pt minus 3pt
\belowdisplayshortskip=\belowdisplayskip%9pt plus 3pt minus 5pt
\newcommand\smallads{\vadjust{\vskip-5pt plus-1pt minus-1pt}}
\newcommand\smallbds{\vskip-1\lastskip\vskip5pt plus3pt minus2pt\noindent}

\title{\Large Stability of uniformly eventually positive $C_0$-semigroups \\
on $L_p$-spaces}
\author{Hendrik Vogt%
\footnote{Fachbereich 3 -- Mathematik, Universit{\"a}t Bremen, 28359 Bremen, Germany,
+49\,421\,218-63702,
%E-mail obfuscation:
{\tt hendrik.vo\rlap{\textcolor{white}{hugo@egon}}gt@uni-\rlap{\textcolor{white}{hannover}}bremen.de}}}
\date{}
%\date{\normalsize Fachbereich 3 -- Mathematik, Universit{\"a}t Bremen, \\
%      28359 Bremen, Germany}

\maketitle

\begin{abstract}
We give a short and elementary proof of the theorem of Lutz Weis
that the growth bound of a positive $C_0$-semigroup on $L_p(\mu)$
equals the spectral bound of its generator.
In addition, we generalise the result to the case of uniformly eventually positive semigroups.

\vspace{8pt}

\noindent
MSC 2020: 47D06, 47B65, 46G10

\vspace{2pt}

\noindent
Keywords: $C_0$-semigroups, eventually positive semigroups
\end{abstract}

\bigskip

Throughout let $(\Omega,\mu)$ be a measure space, and let $1<p<\infty$;
we do not assume $\mu$ to be $\sigma$-finite.
Let $T$ be a $C_0$-semi\-group on $L_p(\mu)$.
A basic question of stability theory is when the growth bound
\[
  \omega_0(T)
  = \inf\set{\omega\in\R}{\exists M\ge1\,\forall t\ge0\colon \|T(t)\| \le Me^{\omega t}}
\]
of the semigroup~$T$ coincides with the spectral bound
\[
  s(A) = \sup\set{\Re\lambda}{\lambda\in\sigma(A)}
\]
of its generator~$A$.
In \cite{wei95}, Weis proved that this is the case if the semigroup~$T$ is positive;
in \cite{wei98} he gave a shorter proof of the same fact.

The aim of this paper is to give an even shorter and more elementary proof.
Also, we show that the result generalises to the more general
case of uniformly eventually positive semigroups.
As in \cite[Def\.5.1]{dgk16} we say that $T$ is \emph{uniformly eventually positive}
if there exists $t_0\ge0$ such that $T(t) \ge 0$ for all $t\ge t_0$.
There are several important examples of operators that generate
uniformly eventually positive semigroups, such as certain Dirichlet-to-Neumann operators
and bi-Laplace operators, see \cite[Sec\.4]{dagl18}.

We denote by $L_p(\mu)_+$ the set of positive functions in $L_p(\mu)$.

\begin{lemma}\label{lemma}
Let $I\subseteq\R$ be a compact interval, and let
$g\in C(I;L_p(\mu)_+)$. Then
\[
  \left( \int_I g(t)^p\,dt \right)^\oop
  = \sup\set{\textstyle\int_I h(t)g(t)\,dt}{h\in L_{p'}(I;\R)_+,\ \|h\|_{p'}\le1},
\]
where the integrals are Bochner integrals
and the supremum is taken in the Banach lattice $L_p(\mu)$.
\end{lemma}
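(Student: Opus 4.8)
The plan is to reduce the identity, for a.e.\ point of $\Omega$, to the classical duality between $L_p(I)$ and $L_{p'}(I)$, the only genuine difficulty being the passage from pointwise to lattice suprema over the \emph{uncountable} family of test functions. First I would reduce to the $\sigma$-finite case: as $I$ is compact, $g(I)$ is a compact, hence separable, subset of $L_p(\mu)$, and picking a dense sequence $(g(t_n))_n$ whose supports are each $\sigma$-finite, every value $g(t)$ is supported on the $\sigma$-finite set $\Omega_0:=\bigcup_n\{g(t_n)\ne0\}$. Off $\Omega_0$ both sides of the asserted identity vanish, so I may assume $\mu$ to be $\sigma$-finite. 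Then $g$, being continuous and hence strongly measurable, has a jointly measurable representative $G\from I\times\Omega\to\R$ with $G\ge0$ and $G(t,\cdot)=g(t)$ for every $t$. Writing $F$ for the left-hand side, $F$ is a well-defined element of $L_p(\mu)$ represented for a.e.\ $x$ by $\bigl(\int_I G(t,x)^p\,dt\bigr)^\oop=\|G(\cdot,x)\|_{L_p(I)}$, which is finite a.e.

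Next I would show that $F$ is an upper bound for the family. Set $\mathcal H:=\{h\in L_{p'}(I;\R)_+:\|h\|_{p'}\le1\}$. For $h\in\mathcal H$ a Fubini argument---testing against indicators $\ind_S$ of sets $S$ of finite measure (which separate the points of $L_p(\mu)$) and using $\int_S\int_I h(t)G(t,x)\,dt\,d\mu=\int_I h(t)\int_S G(t,x)\,d\mu\,dt$---identifies the Bochner integral $\int_I h(t)g(t)\,dt$ with the function $x\mapsto\int_I h(t)G(t,x)\,dt$. Pointwise H\"older on $I$ then gives $\int_I h(t)G(t,x)\,dt\le\|h\|_{p'}\,F(x)\le F(x)$ for a.e.\ $x$, that is, $\int_I hg\,dt\le F$ in $L_p(\mu)$. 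Hence the supremum on the right-hand side is $\le F$.

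It remains to prove that $F$ is the \emph{least} upper bound, and this is the crux. Since $L_{p'}(I)$ is separable, I fix a countable dense subset $(h_k)_k$ of $\mathcal H$ and put $v_k:=\int_I h_k g\,dt$, represented by $x\mapsto\int_I h_k(t)G(t,x)\,dt$. The pointwise supremum $w(x):=\sup_k\int_I h_k(t)G(t,x)\,dt$ is measurable with $0\le w\le F$, so $w\in L_p(\mu)$, and being a countable supremum it equals the lattice supremum $\sup_k v_k$. Now fix $x$ with $F(x)<\infty$: by H\"older the map $h\mapsto\int_I h(t)G(t,x)\,dt$ is Lipschitz on $L_{p'}(I)$ with constant $F(x)$, so density of $(h_k)_k$ together with the classical duality $\|G(\cdot,x)\|_{L_p(I)}=\sup\{\int_I h(t)G(t,x)\,dt:\|h\|_{p'}\le1,\ h\ge0\}$---the restriction to $h\ge0$ being legitimate because $G(\cdot,x)\ge0$---yields $w(x)=F(x)$. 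As this holds for a.e.\ $x$, we get $w=F$. Finally, every upper bound $u$ of $\{\int_I hg\,dt:h\in\mathcal H\}$ dominates each $v_k$, hence $u\ge\sup_k v_k=F$; combined with $F$ being an upper bound, $F$ is the least upper bound, which is the assertion. The main obstacle, as indicated, is exactly this reduction of the uncountable lattice supremum to a countable one, made possible by the separability of $L_{p'}(I)$ and the continuity of the pairing.
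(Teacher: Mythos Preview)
Your proof is correct and follows essentially the same strategy as the paper: reduce to a $\sigma$-finite support set via a countable dense set in $I$ (equivalently, in $g(I)$), pass to a jointly measurable representative so that the Bochner integrals are computed pointwise, and then invoke the scalar duality identity together with the separability of $L_{p'}(I)$ to handle the lattice supremum. The paper compresses the last two steps into a single sentence, whereas you spell out explicitly why $F$ is an upper bound and why the countable supremum over a dense family in $\mathcal H$ already attains $F$ pointwise a.e.; but the underlying ideas are identical.
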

\begin{proof}
Let $D\subseteq I$ be a countable dense subset.
Then the set $\Omega_0 := \bigcup_{t\in D} \set{x\in\Omega}{g(t)(x)\ne\nobreak0}$ is $\sigma$-finite,
and $g(t)|_{\Omega\setminus\Omega_0}=0$ a.e.\ for all $t\in I$.
Now by Fubini's theorem there exists a function $\tilde g\in L_p(I\times\Omega_0)_+$ such that
$g(t)|_{\Omega_0}=\tilde g(t,\cdot)$ for a.e.\ $t\in I$
(see \cite[Prop\.1.2.24]{hnvw16}).
Then $\bigl(\int_I g(t)^p\,dt\bigr)(x) = \int_I \tilde g(t,x)^p\,dt$
and $\bigl(\int_I h(t)g(t)\,dt\bigr)(x) = \int_I h(t)\tilde g(t,x)\,dt$
for a.e.\ $x\in\Omega_0$.
Thus the assertion follows from the separability
of $L_{p'}(I;\R)$ and the validity of the asserted identity for scalar-valued $g$.
\end{proof}

\begin{theorem}\label{theorem}
\newcommand{\T}{\hskip0.08em T\hskip-0.08em}
Let $\T$ be a uniformly eventually positive $C_0$-semi\-group on $L_p(\mu)$.
Then the growth bound of\/ $\T$ equals the spectral bound of its generator.
\end{theorem}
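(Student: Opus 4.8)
The inequality $s(A)\le\omega_0(T)$ is the standard, easy one: for $\Re\lambda>\omega_0(T)$ the Laplace integral $\int_0^\infty e^{-\lambda t}T(t)\,dt$ represents the resolvent, so $\sigma(A)\subseteq\{\Re\lambda\le\omega_0(T)\}$. The plan is to prove the reverse inequality. First I would reduce to a stability statement by rescaling: since $e^{-\omega\cdot}T$ is again uniformly eventually positive with generator $A-\omega$, and $\omega_0(e^{-\omega\cdot}T)=\omega_0(T)-\omega$ while $s(A-\omega)=s(A)-\omega$, it suffices to show that $s(A)<0$ forces $\omega_0(T)<0$. By the Datko--Pazy theorem this follows once I establish, for every $f\in L_p(\mu)$, the orbit integrability $\int_0^\infty\|T(t)f\|_p^p\,dt<\infty$ with a bound $\lesssim\|f\|_p^p$. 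Splitting $f=f^+-f^-$ and using $\|T(t)f\|\le\|T(t)f^+\|+\|T(t)f^-\|$ reduces this to $f\ge0$; and the harmless contribution of $[0,t_0]$ together with the time shift $t\mapsto t+t_0$ reduces it further to bounding $\int_0^\infty\|\tilde g(t)\|_p^p\,dt$, where $\tilde g(t):=T(t)f_0$ with $f_0:=T(t_0)f\ge0$, so that $\tilde g(t)\ge0$ for \emph{all} $t\ge0$.

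The key preliminary is that, despite only eventual positivity, the Laplace integral of $\tilde g$ converges for every real $\lambda>s(A)$. From the identity $R(\lambda,A)=\int_0^S e^{-\lambda t}T(t)\,dt+e^{-\lambda S}T(S)R(\lambda,A)$ (valid for $\lambda\in\rho(A)$ and any $S\ge0$) one reads off $\int_0^S e^{-\lambda t}\tilde g(t)\,dt\le R(\lambda,A)f_0$ as soon as $S\ge t_0$ and $R(\lambda,A)f_0\ge0$. To secure the latter for all real $\lambda>s(A)$ I would note that $R(\lambda,A)^nT(t_0)=\tfrac1{(n-1)!}\int_0^\infty r^{n-1}e^{-\lambda r}T(r+t_0)\,dr\ge0$ for $\lambda>\omega_0(T)$, so $\lambda\mapsto R(\lambda,A)T(t_0)$ is completely monotone there; a Taylor-series argument then propagates positivity of $R(\lambda,A)T(t_0)$, hence of $R(\lambda,A)f_0$, down to every $\lambda>s(A)$. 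Since the $L_p$-norm is order continuous ($p<\infty$), the increasing, order-bounded partial integrals converge, giving $\int_0^\infty e^{-\lambda t}\tilde g(t)\,dt=R(\lambda,A)f_0\ge0$ for every real $\lambda>s(A)$. Taking $\lambda\in(s(A),0)$ yields in particular the geometric tail control $\int_a^\infty\tilde g(t)\,dt=T(a)v\le e^{\lambda a}R(\lambda,A)f_0$, where $v:=\int_0^\infty\tilde g\,dt\in L_p(\mu)_+$.

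It remains to upgrade this integrated, real-axis (time-$L_1$) information to genuine time-$L_p$ integrability of the orbit, and this is exactly where Lemma~\ref{lemma} is meant to do the work that Plancherel and the Gearhart--Pr\"uss theorem can do only at $p=2$. By the Lemma, on a compact $I$ one has $\bigl(\int_I\tilde g(t)^p\,dt\bigr)^{1/p}=\sup\bigl\{\int_I h(t)\tilde g(t)\,dt:\ h\in L_{p'}(I)_+,\ \|h\|_{p'}\le1\bigr\}$ in the lattice $L_p(\mu)$, while Fubini gives $\bigl\|\bigl(\int_I\tilde g^p\,dt\bigr)^{1/p}\bigr\|_p^p=\int_I\|\tilde g(t)\|_p^p\,dt$. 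Thus the target quantity is exactly the norm of this lattice supremum, and the point of the Lemma is that each competing term $\int_I h(t)T(t)f_0\,dt$ is the orbit integrated against a \emph{scalar} weight $h$, a quantity accessible through the Laplace transform and hence through the real-axis resolvents $R(\lambda,A)f_0$ of the previous step. The plan is to use the positivity of the orbit together with the geometric tail bound $T(a)v\le e^{\lambda a}R(\lambda,A)f_0$ to estimate these scalar-weighted integrals, and so to bound the supremum, i.e.\ the element $\bigl(\int_I\tilde g^p\,dt\bigr)^{1/p}$, uniformly in $I$ by a fixed resolvent-type element of $L_p(\mu)$ whose norm is $\lesssim\|f_0\|_p$.

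The hard part is precisely this last estimate, and I expect it to be the whole technical heart. The naive routes all fail: the reverse triangle inequality forbids deducing $\int\|\tilde g\|_p$ from $\|\int\tilde g\|_p$; the operator norms $\|R(\lambda,A)^n\|$ are not controlled as $\lambda\downarrow s(A)$, so a Post--Widder inversion cannot be fed in directly; and bounding the individual pairings $\|\int_I h\tilde g\|_p$ is \emph{not} enough, since the norm of the lattice supremum can strictly exceed the supremum of the norms. The positivity must therefore be exploited in the lattice sense and combined with the real-$\lambda$ resolvent identities to produce a single $L_p(\mu)$-majorant that is uniform in $h$; converting the scalar real-axis Laplace bounds into such a vector-valued majorant is the decisive step where the non-Hilbertian nature of $L_p$ is genuinely felt. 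Once that domination is in hand, $\int_0^\infty\|T(t)f\|_p^p\,dt<\infty$ follows, the Datko--Pazy theorem gives $\omega_0(T)<0$ under $s(A)<0$, and undoing the rescaling yields $\omega_0(T)\le s(A)$, hence equality.
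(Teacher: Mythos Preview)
Your framework matches the paper's --- rescale to $s(A)<0$, aim at Datko via Lemma~\ref{lemma} --- and your preliminary on convergence of the Laplace integral for real $\lambda>s(A)$ is exactly what the paper cites from \cite[Prop.\,7.1]{dgk16}. But you explicitly leave open the ``decisive step'': producing a \emph{single lattice majorant} in $L_p(\mu)$ for all the weighted integrals $\int_I h(t)T(t)f\,dt$ with $\|h\|_{p'}\le1$. Your tail bound $\int_a^\infty\tilde g\le e^{\lambda a}R(\lambda,A)f_0$ is only $L_1$-in-time information and, as you yourself observe, does not control that lattice supremum; norm bounds on the individual pairings are insufficient, and you offer no mechanism to pass from the one to the other. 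So the proposal is a correct outline with the core estimate missing.

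The paper's missing idea is a short averaging/windowing trick. With $h$ supported in $[2t_0+1,t_1]$ and Fubini one writes
\[
\int_{2t_0+1}^{t_1} h(t)T(t)f\,dt
=\int_{t_0}^{\infty} T(s)\Bigl(\int_{t_0}^{t_0+1} h(t+s)T(t)f\,dt\Bigr)\,ds,
\]
and now Lemma~\ref{lemma} applied on the \emph{fixed} unit window $[t_0,t_0+1]$ bounds the inner integral \emph{in the lattice sense} by the single element $g:=\bigl(\int_{t_0}^{t_0+1}(T(t)f)^p\,dt\bigr)^{1/p}$, uniformly in $h$ and in $s$ (since $\|h(\cdot+s)\|_{p'}\le\|h\|_{p'}\le1$). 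Positivity of $T(s)$ for $s\ge t_0$ then gives $\int_{2t_0+1}^{t_1} h(t)T(t)f\,dt\le\int_{t_0}^{\infty}T(s)g\,ds=:g_1$, a fixed element of $L_p(\mu)$ independent of $h$ and $t_1$, finite by the Laplace convergence you already established. A second use of Lemma~\ref{lemma} yields $\int_{2t_0+1}^{\infty}\|T(t)f\|_p^p\,dt\le\|g_1\|_p^p$, and Datko finishes. What you were missing is precisely this factorisation: the semigroup property turns the long-time integral into $T(s)$ acting on a short-window integral, and the Lemma then dominates that short piece by a fixed $g$ uniformly in the scalar weight --- this is what converts the real-axis Laplace data into the uniform vector-valued majorant you were seeking.
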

\begin{proof}
Let $A$ be the generator of $T$, and assume that $s(A)<0$.
We show $\omega_0(T)<0$, using Datko's theorem; then the assertion follows
since $s(A)\le\omega_0(T)$ is always true.

Let $t_0\ge0$ be such that $T(t) \ge 0$ for all $t\ge t_0$.
Let $f\in L_p(\mu)_+$ and $t_1>2t_0+1$.
Let $h\in L_{p'}(2t_0,\infty)_+$ satisfy $h|_{(2t_0,2t_0+1)} = 0$,\,
$h|_{(t_1,\infty)} = 0$ and $\|h\|_{p'} \le 1$. Then
\[
  \int_{2t_0+1}^{t_1} h(t)T(t)f\,dt
  = \int_{2t_0+1}^\infty \int_{t-t_0-1}^{t-t_0} ds\, h(t)T(t)f\,dt
  = \int_{t_0}^\infty \!\! \int_{s+t_0}^{s+t_0+1} h(t)T(t)f\,dt\,ds
\]
by Fubini's theorem. Lemma~\ref{lemma} shows that
\[
  \int_{t_0}^{t_0+1} h(t+s)T(t)f\,dt
  \le \left( \int_{t_0}^{t_0+1} \bigl(T(t)f\bigr)^p dt \right)^\oop
  =: g \in L_p(\mu),
\]
so
\[
  \int_{s+t_0}^{s+t_0+1} h(t)T(t)f\,dt
  = T(s) \int_{t_0}^{t_0+1} h(t+s)T(t)f\,dt
  \le T(s) g
\]
for all $s\ge t_0$, by the positivity of $T(s)$. It follows that
\[
  \int_{2t_0+1}^{t_1} h(t)T(t)f\,dt
  \le \int_{t_0}^\infty T(s)g\,ds =: g_1 \in L_p(\mu),
\]
where the latter integral is convergent as an improper Riemann integral
by our assumption $s(A)<0$; see \cite[Prop\.7.1]{dgk16}.
Using Lemma~\ref{lemma} again we conclude that
$
  \int_{2t_0+1}^{t_1} \bigl(T(t)f\bigr)\rule{0pt}{1.8ex}^p dt \le g_1^p
$
for all $t_1>2t_0+1$ and hence
\[
  \int_{2t_0+1}^\infty \|T(t)f\|_p^p\,dt \le \|g_1\mkern-1.5mu\|_p^p < \infty.
\]
Now Datko's theorem (see \cite[Thm\.V\kern-0.05em.1.8]{enna00})
implies that $\omega_0(T)<0$.
\end{proof}

\begin{remarks}
(a) It remains an open question whether Theorem~\ref{theorem} is also true
if $T$ is merely an \emph{individually} eventually positive $C_0$-semi\-group,
i.e., if for all $f\in L_p(\mu)_+$ there exists $t_0\ge0$ such that $T(t)f\ge0$
for all $t\ge t_0$.
Note that our proof breaks down in this case because it relies on the positivity
of the operator~$T(s)$ for $s\ge t_0$.
If $p=1$ or $p=2$, then the answer to the above question is positive; see \cite[Thm\.7.8]{dgk16}.

(b) In \cite[Thm\.3.8]{rove18}, sharp estimates are given for the asymptotic behaviour
of positive $C_0$-semigroups on $L_p(\mu)$ in terms of resolvent bounds.
This result can be viewed as a quantitative version of Theorem~\ref{theorem}
for the case of positive semigroups.
\end{remarks}

\noindent
\textbf{Acknowledgment.}
I would like to thank Jochen Gl\"uck for valuable discussions.

\end{document}